\documentclass[a4paper,12pt]{amsart}
\usepackage{amsmath,amssymb, amscd,latexsym,here,comment,color}

\newtheorem{theorem}{Theorem}

\newtheorem{Example}{Example}

\newtheorem{lemma}{Lemma}
\newtheorem{corollary}{Corollary}
\newtheorem{assertion}{Assertion}
\newtheorem{remark}{Remark}
\newtheorem{Transversalitytheorem}[theorem]{Transversality Theorem}
\setlength{\textheight}{23cm}
\setlength{\textwidth}{16cm}
\setlength{\oddsidemargin}{0cm}
\setlength{\evensidemargin}{0cm}
\setlength{\topmargin}{0cm}

\newcommand{\zz}{\mathbf z}

\newcommand{\ww}{\mathbf w}
\newcommand\inv{^{-1}}

\newcommand{\si}{\sigma}
\def\mapright#1{\smash{\mathop{\longrightarrow}\limits^{{#1}}}}
\def\mapdown#1{\Big\downarrow\rlap{$\vcenter{\hbox{$#1$}}$}}


\title[Topology of mixed hypersurfaces of cyclic type]{Topology of mixed hypersurfaces of cyclic type}
\author{Kazumasa Inaba, Masayuki Kawashima and Mutsuo Oka}
\address[K. Inaba]{Mathematical Institute, Tohoku University, Sendai, 980-8578, Japan}
\email{sb0d02@math.tohoku.ac.jp }
\address[M. Kawashima]{Department of Information Science, Okayama University of Science, 1-1 Ridai-cho, Kitaku, Okayama 
700-0005, Japan}
\email{kawashima@mis.ous.ac.jp}
\address[M. Oka]{Department of Mathematics, Tokyo University of Science, 1-3 Kagurazaka, Shinjuku-ku, Tokyo 
162-8601, Japan}
\email{oka@rs.kagu.tus.ac.jp}

\begin{document}
\renewcommand{\thefootnote}{\fnsymbol{footnote}}
\footnote[0]{2010\textit{ Mathematics Subject Classification}.
Primary 14J17; Secondary: 14J70, 58K05.}

\footnote[0]{\textit{Key words and phrases}. Polar weighted homogeneous, Milnor fibration. 
}

\maketitle


\begin{abstract}
Let 
$f_{II}(\zz, \bar{\zz}) = z_{1}^{a_{1}+b_{1}}\bar{z}_{1}^{b_{1}}z_{2} + \cdots + z_{n-1}^{a_{n-1}+b_{n-1}}\bar{z}_{n-1}^{b_{n-1}}z_{n} + z_{n}^{a_{n}+b_{n}}\bar{z}_{n}^{b_{n}}z_{1}$ 
be a~mixed weighted homogeneous polynomial of cyclic type and 
$g_{II}(\zz) = z_{1}^{a_{1}}z_{2} + \cdots + z_{n-1}^{a_{n-1}}z_{n} + z_{n}^{a_{n}}z_{1}$ 
be the associated weighted homogeneous polynomial
in the sense of \cite{O1} where $a_{j} \geq 1$ and $b_{j} \geq 0$ for $j = 1, \dots, n$. 
We show that two links $S^{2n-1}_{r} \cap f_{II}^{-1}(0)$ 
and $S^{2n-1}_{r} \cap g_{II}^{-1}(0)$ are diffeomorphic and their Milnor fibrations are isomorphic. 
\end{abstract}


\section{Introduction}
Let $f(\zz, \bar{\zz})$ be a mixed polynomial 
of complex variables $\zz = (z_1, \dots, z_n)$ 
given as 
\[
f(\zz, \bar{\zz}) := \sum_{i = 1}^{m} c_{i}\zz^{\nu_{i}}\bar{\zz}^{\mu_{i}}, 
\]
where $c_{i} \in \Bbb{C}^*$ and 
$\zz^{\nu_{i}} = z^{\nu_{i,1}}_1 \cdots z^{\nu_{i,n}}_n$  
for $\nu_{i} = (\nu_{i,1}, \dots, \nu_{i,n})$  
(respectively $\bar{\zz}^{\mu_{i}} = \bar{z}_{1}^{\mu_{i,1}} \cdots \bar{z}_{n}^{\mu_{i,n}}$ for $\mu_{i} = (\mu_{i,1}, \dots, \mu_{i,n}))$. 
Here $\bar{z}_j$~represents the complex conjugate of $z_j$. 


A point $\ww \in \Bbb{C}^{n}$ is called \textit{a mixed singular point of $f(\zz, \bar{\zz})$} 
if~the gradient vectors of $\Re f$ and $\Im f$ are linearly dependent at~$\ww$. 
Certain restricted classes of mixed
polynomials of the variables $\zz$  which admit Milnor fibrations 
had been considered by J. Seade, 
see for instance \cite{S1, S2}. 
 The last author introduced the notion of the Newton boundary and the concept of non-degeneracy for a mixed polynomial
and  he showed  the existence of Milnor fibration  for  the  class of
strongly  non-degenerate  mixed polynomials  \cite{O2}.

We consider the classes of mixed polynomials 
which was first introduced by Ruas-Seade-Verjovsky \cite{RSV} and J. L. Cisneros-Molina \cite{C}. 
Let $p_{1}, \dots, p_{n}$ and $q_{1}, \dots, q_{n}$ be integers such that 
$\gcd(p_1, \dots, p_n) = \gcd(q_1,\dots,q_n) = 1$. 
We define the~$S^1$-action and the~$\Bbb{R}^{*}$-action on~$\Bbb{C}^{n}$ as follows: 
\begin{equation*}
\begin{split}
s \circ \zz &= (s^{p_{1}}z_{1}, \dots, s^{p_{n}}z_{n}), \ \ s \in S^{1}, \\
r \circ \zz &= (r^{q_1}z_{1}, \dots, r^{q_n}z_{n}), \ \ r \in \Bbb{R}^{*}. 
\end{split}
\end{equation*}
If there exists a positive integer $d_p$ such that $f(\zz, \bar{\zz})$ satisfies 
\[
f(s^{p_{1}}z_{1}, \dots, s^{p_{n}}z_{n}, \bar{s}^{p_1}\bar{z}_{1}, \dots, \bar{s}^{p_1}\bar{z}_{n})
 = s^{d_p}f(\zz, \bar{\zz}), \ \ s \in S^{1},  \notag \\
\]
we say that $f(\zz, \bar{\zz})$ is \textit{a polar weighted homogeneous polynomial}. 
Similarly $f(\zz, \bar{\zz})$ is called \textit{a~radial weighted homogeneous polynomial} 
if there exists a positive integer $d_{r}$ such that 
\[
f(r^{q_1}z_{1}, \dots, r^{q_n}z_{n}, r^{q_1}\bar{z}_{1}, \dots, r^{q_n}\bar{z}_{n}) 
= r^{d_{r}}f(\zz, \bar{\zz}), \ \ r \in \Bbb{R}^{*}. 
\]
Let $f$ be a polar and radial weighted homogeneous polynomial.  
Then $f$ admits 
the global Milnor fibration $f : \Bbb{C}^{n} \setminus f^{-1}(0) \rightarrow \Bbb{C}^{*}$,
see for instance \cite{RSV, C, O1, O2}. 

Let $f(\zz, \bar{\zz}) = \sum_{i = 1}^{m} c_{i}\zz^{\nu_{i}}\bar{\zz}^{\mu_{i}}$ be a mixed polynomial
with $c_j\ne 0, j=1,\dots, m$.
Put 
\[
g(\zz) := \sum_{i = 1}^{m} c_{i}\zz^{\nu_{i} - \mu_{i}}. 
\]
We call $g$ \textit{the associated Laurent polynomial of $f$}. 
A mixed polynomial $f$ is called \textit{simplicial} if  $m\le n$ and 
the ranks of 
the matrices $N \pm M$ are $m$
 where 
$N = (\nu_{1}, \dots, \nu_{n})$ and $M = (\mu_{1}, \dots, \mu_{n})$.  Here $\nu_i$ and $\mu_i$ are considered as column vectors
$\nu_i= {}^t(\nu_{i1},\dots,\nu_{in}),\,\mu_i= {}^t(\mu_{i1},\dots,\mu_{in})$.
$f$ is called  {\em full} 
if $m=n$.
A full simplicial mixed polynomial $f$ and its associated Laurent polynomial~$g$ 
admit a unique polar weight and a~unique radial weight in the above sense \cite{O1}. 
It is useful to consider a graph $\Gamma$ associated to $f$.
First we associate a vertex $v_i$ if $z_i$ or $\bar z_i$ appears in $f$.  We join $v_i$ and $v_j$  by an edge if there is a monomial
$\zz^{\nu_k}{\bar{\zz}}^{\mu_k}$ which contains both variables $z_i, z_j$.
That is $\nu_{k,a}+\mu_{k,a}>0$ for $a=i, j$. Most important graphs are a bamboo graph
\[
B_n:\,\overset{v_1}\bullet \rule[1mm]{1cm}{0.3mm}\overset{v_2}\bullet \rule[1mm]{1cm}{0.3mm}\dots
\rule[1mm]{1cm}{0.3mm}
\overset{v_{n-1}}\bullet\rule[1mm]{1cm}{0.3mm}\overset{v_n}\bullet
\]
and a cyclic graph $C_n$ which is obtained from $B_n$ adding an edge between $v_n$ and $v_1$.

We restrict the Milnor fibrations defined by $f$ and $g$ on the complex torus $\Bbb{C}^{*n}$ 
where $\Bbb{C}^{*n} = (\Bbb{C}^{*})^{n}$. 
In \cite[Theorem 10]{O1}, it is shown that there exists a canonical
diffeomorphism $\varphi : \Bbb{C}^{*n} \rightarrow \Bbb{C}^{*n}$ 
which gives an isomorphism of the Milnor fibrations defined by $f$ and $g$: 

\[\begin{matrix}
\Bbb{C}^{*n}\setminus f^{-1}(0)&\mapright{\varphi}& \Bbb{C}^{*n}\setminus g^{-1}(0) \\
\mapdown{f}&&\mapdown{g}\\
\Bbb{C}^{*}&=& \Bbb{C}^{*} 
\end{matrix}. 
\] 
However the canonical diffeomorphism $\varphi$ does not extend to $\mathbb C^n\setminus\{ O\}$ in general.  
Here $O$ is the origin of $\Bbb{C}^{n}$. 
The exceptional case is  a mixed Brieskorn polynomial,  for which this canonical diffeomorphism extends as  a continuous homeomorphism \cite{RSV}.
In \cite{O3},  the last author studied the following simplicial polar weighted homogeneous polynomials:  
\begin{equation*}
\begin{cases}
f_{\mathbf{a},\mathbf{b}}(\zz, \bar{\zz}) &= z_{1}^{a_{1}+b_{1}}\bar{z}_{1}^{b_{1}} + \cdots + z_{n-1}^{a_{n-1}+b_{n-1}}\bar{z}_{n-1}^{b_{n-1}} + z_{n}^{a_{n}+b_{n}}\bar{z}_{n}^{b_{n}} \\
f_{I}(\zz, \bar{\zz}) &= z_{1}^{a_{1}+b_{1}}\bar{z}_{1}^{b_{1}}z_{2} + \cdots + z_{n-1}^{a_{n-1}+b_{n-1}}\bar{z}_{n-1}^{b_{n-1}}z_{n} + z_{n}^{a_{n}+b_{n}}\bar{z}_{n}^{b_{n}} \\
f_{II}(\zz, \bar{\zz}) &= z_{1}^{a_{1}+b_{1}}\bar{z}_{1}^{b_{1}}z_{2} + \cdots + z_{n-1}^{a_{n-1}+b_{n-1}}\bar{z}_{n-1}^{b_{n-1}}z_{n} + z_{n}^{a_{n}+b_{n}}\bar{z}_{n}^{b_{n}}z_{1}, \\
\end{cases}
\end{equation*}
where $a_{j} \geq 1$ and $b_{j} \geq 0$ for $j =1, \dots ,n$. 
Here  the notation is the same  as  in \cite{O3}.  
Note that the graph of $f_I$ is a bamboo and that of $f_{II}$ is a cyclic graph. 
The graph of $f_{\mathbf{a},\mathbf{b}}$ is $n$~disjoint vertices without any edges.
A polar weighted homogeneous polynomial $f_{\mathbf{a},\mathbf{b}}(\zz, \bar{\zz})$ and 
a~weighted homogeneous polynomial $g_{\mathbf{a}}(\zz)$ are called 
\textit{a mixed Brieskorn polynomial} and 
\textit{a Brieskorn polynomial} respectively. 
A mixed polynomial~$f_{I}(\zz, \bar{\zz})$ is called  
\textit{a~simplicial mixed polynomial of bamboo type} and 
$f_{II}(\zz, \bar{\zz})$ is called \textit{a~simplicial mixed polynomial of cyclic type} respectively. 
He showed that  two links of $f_{\iota}$ and the associated polynomial $g_\iota({\bf  z})$ 
in a small sphere are isotopic and their Milnor fibrations are isomorphic 
for  $\iota = (\mathbf{a}, \mathbf{b})$ and $I$. 
He  conjectured the assertion will be also true for the case $f_{II}$.

\section{Statement of the result}
The purpose of this paper is to give a positive answer to the above  conjecture. Thus
we study the following simplicial polynomial $f_{II}(\zz, \bar{\zz})$ 
and its associated weighted homogeneous polynomial $g_{II}(\zz)$: 
\begin{equation*}
\begin{cases}
f_{II}(\zz, \bar{\zz}) &= z_{1}^{a_{1}+b_{1}}\bar{z}_{1}^{b_{1}}z_{2} + \cdots + z_{n-1}^{a_{n-1}+b_{n-1}}\bar{z}_{n-1}^{b_{n-1}}z_{n} + z_{n}^{a_{n}+b_{n}}\bar{z}_{n}^{b_{n}}z_{1}, \\
g_{II}(\zz) &= z_{1}^{a_{1}}z_{2} + \cdots + z_{n-1}^{a_{n-1}}z_{n} + z_{n}^{a_{n}}z_{1}, 
\end{cases}
\end{equation*}
where $a_{j} \geq 1$ and $b_{j} \geq 0$ for $j =1, \dots ,n$. 
We assume that $f_{II}$ contains a conjugate ${\bar z_j}$ for some $j$.
  This implies \newline
(a)  there exists $j\in \{1, \dots ,n\}$ such that $b_{j} \geq 1$. 

 We also assume that $f_{II}$ is simplicial.
As  the determinant of $N-M$ is given by $a_1\cdots a_n+(-1)^{n+1}$,
we assume also that 
\newline
 (b) there exists $k\in \{1, \dots ,n\}$ such that $a_{k} \geq 2$. 

\noindent
Though this assumption is not necessary if $n$ is odd, we assume (b) anyway. 
Since $f_{II}(\zz, \bar{\zz})$ is a polar and radial weighted homogeneous, 
$f_{II}(\zz, \bar{\zz})$ admits a global fibration 
\[
f_{II}: \Bbb{C}^{n} \setminus f_{II}^{-1}(0) \rightarrow \Bbb{C}^{*} 
\]
\cite{RSV, C, O1, O2}. 
The complex polynomial $g_{II}(\zz)$ is a weighted homogeneous polynomial 
with respect to the same polar weight of $f_{II}$ and $g_{II}$ with $n=3$ is  listed in the classification of weighted homogeneous surfaces in $\mathbb C^3$ with isolated singularity \cite{OW}. 
We consider the hypersurfaces 
\[V_f:=f_{II}\inv(0),\quad V_g:=g_{II}\inv(0)\]
 and respective links
 \[K_{f,r} =V_f \cap S^{2n-1}_{r},\quad K_{g,r} =V_g \cap S^{2n-1}_{r} \]
 where $S^{2n-1}_{r}$ is the~$(2n-1)$-dimensional sphere centered at the origin $O$ with radius $r$. 
Then 
two links $K_{f,r}$ and $K_{g,r}$ are smooth for any $r>0$  (\cite{O1}).
We consider the following family of mixed polynomials: 
\begin{equation*}
\begin{split}
f_{II, t}(\zz, \bar{\zz}) := &(1 - t)f_{II}(\zz, \bar{\zz}) + tg_{II}(\zz) \\
= &(1-t)(z_{1}^{a_{1}+b_{1}}\bar{z}_{1}^{b_{1}}z_{2} + \cdots + z_{n-1}^{a_{n-1}+b_{n-1}}\bar{z}_{n-1}^{b_{n-1}}z_{n} + z_{n}^{a_{n}+b_{n}}\bar{z}_{n}^{b_{n}}z_{1}) \\
&+ t(z_{1}^{a_{1}}z_{2} + \cdots + z_{n-1}^{a_{n-1}}z_{n} + z_{n}^{a_{n}}z_{1}) \\
= &\sum_{j=1}^{n}z_{j}^{a_{j}}z_{j+1}\{(1-t)\lvert z_{j} \rvert^{2b_{j}} + t\}
\end{split}
\end{equation*}
where $0 \leq t \leq 1$. 
Here the numbering is modulo $n$, so $z_{n+1} = z_{1}$. 
Though mixed polynomial 
$f_{II, t}$ is not radial weighted homogeneous for $t \neq 0, 1$,
$f_{II, t}$ is polar weighted homogeneous for $0 \leq t \leq 1$ with the same weight $P=(p_1,\dots, p_n)$
which is characterized by
$a_jp_j+p_{j+1}=d_p,\,j=1,\dots, n$.  
Put 
\[V_{t} = f_{II,t}^{-1}(0),\,\, K_{t, r} = S^{2n-1}_{r} \cap V_{t},\quad 0\le t\le 1.
\]
Note that 
\[\begin{split}
&f_{II,0}=f_{II},\,f_{II,1}=g_{II}\\
&V_{f}=V_{0},\,\, K_{f,r}=K_{0,r},\,\, V_{g}=V_{1},\,\, K_{g,r}=K_{1,r}.
\end{split}
\]
First recall that $V_{t}$ has an isolated mixed singularity at the origin $O$ 
and  $V_{t}\setminus\{ O \}$ is  non-singular for any $0\le t\le 1$ 
by \cite[Lemma 9]{O3}.  
Our main result is: 
\begin{Transversalitytheorem}\label{Transversality} Let $V_t$ be as above.
For any fixed $r>0$, the sphere $S^{2n-1}_{r}$ and the family of hypersurfaces~$V_{t}$ 
are transversal for $0 \le t \le 1$. 
\end{Transversalitytheorem}

\section{Proof of  Transversality Theorem \ref{Transversality}}
\subsection{Strategy of the proof}
We follow the recipe of  \cite{O3}.
First recall that
\begin{equation*}
\begin{split}
f_{II, t}(\zz, \bar{\zz}) := &(1 - t)f_{II}(\zz, \bar{\zz}) + tg_{II}(\zz) \\
= &\sum_{j=1}^{n}z_{j}^{a_{j}}z_{j+1}\{(1-t)\lvert z_{j} \rvert^{2b_{j}} + t\}.
\end{split}
\end{equation*}
Recall  that $V_t$ is non-singular off the origin by \cite{O3}.
To show the transversality of the sphere $S_{r_0}^{2n-1}$ and $V_t$, 
we have to show that the Jacobian matrix of $\Re f_{II,t},\Im f_{II,t}$ and $\rho(\bf z)$ has rank~$3$ 
at every intersection 
${\bf w}\in S_{r_0}^{2n-1}\cap V_t$. Here 
$\rho(\bf z)=\|\bf z\|^2$,  the square of the radius $\|\bf z\|$. However this computation is extremely complicated.
Instead, we follow the recipe of \cite{O3}. We will show {\em the existence of a tangent vector ${\bf v}\in T_{\ww}V_{t}$ which is  not tangent to the sphere $S_{r_0}^{2n-1}$.}

Take a point $\ww = (w_{1}, \dots, w_{n}) \in V_{t}\cap S_{r_0}^{2n-1}$ and fix it hereafter. 
To find such a vector~$\bf v$, 
we will construct a real analytic path 
\[
{\ww}(s) = (r_{1}(s)w_{1}, \dots, r_{n}(s)w_{n})
\]
\newline
on a neighborhood of $s=0$ so that $\ww(0)=\ww$ and 
\begin{eqnarray} \label{basicequality}
f_{II, t}(\ww(s), \bar{\ww}(s)) 
&=& (s+1) f_{II, t}(\ww, \bar{\ww}) \label{equality1}
\end{eqnarray}
where $r_{j}(s),\,j=1,\dots, n$ are  real-valued functions on  $\lvert s \rvert \ll 1$
which satisfy certain functional equalities.  
The equality (\ref{equality1}) implies that the curve $\ww(s)$ is  an embedded  curve in $V_{t}$ with  $\ww(0)=\ww$.
Then we define the vector as the tangent vector of this curve at $s=0$:
\begin{eqnarray}\label{equation2}
{\bf v}=\frac{d\ww}{ds}(0).
\end{eqnarray}
To find such a path
${\ww}(s)=(r_1(s)w_1,\dots,r_n(s)w_n)$, we solve  a certain functional equation, using the   inverse mapping  theorem. 
\subsection{Construction  of $\ww(s)$}
First we consider the following map: 
\begin{equation*}
\begin{split}
\Phi_{\ww}: \Bbb{R}^{n+1} &\rightarrow \Bbb{R}^{n+1} \\
(r_{1}, \dots, r_{n}, s) &\mapsto (h_{1}, \dots, h_{n}, s), 
\end{split}
\end{equation*}
where  $h_j$ is a polynomial function of variables $r_1,\dots, r_n$ and $s$ defined by
\begin{eqnarray}
h_{j} &= r_{j}^{a_{j}}r_{j+1}\{(1-t)\lvert w_{j}\rvert^{2b_{j}}r_{j}^{2b_{j}} + t \} - (s+1)\{(1-t)\lvert w_{j}\rvert^{2b_{j}} + t\},\, j = 1, \dots, n
\end{eqnarray}
where $t$ is fixed on $ 0 \le t \le 1$. 
We want to solve the equations $h_1=\dots=h_n=0$ in $r_1,\dots, r_n$ expressing $r_j$ as a function of $s$ so that we get 
the system of equations
 \begin{eqnarray}h_j(r_1(s),\dots, r_n(s),s)\equiv 0,\,\, j=1,\dots, n.
 \end{eqnarray} 
 This equality is equivalent to 
(\ref{basicequality}) which is more explicitly written as 
\begin{eqnarray*}
f_{II,t}(\ww(s),\bar {\ww}(s))&=& (s+1)f_{II,t}(\ww,\bar \ww)\quad\text{where}\notag\\
f_{II, t}(\ww(s), \bar{\ww}(s))&=&
\sum_{j=1}^{n}(r_{j}(s)w_j)^{a_{j}}(r_{j+1}(s)w_{j+1})
\{(1-t)\lvert w_{j}\rvert^{2b_{j}}r_{j}(s)^{2b_{j}} + t \},\\
(s+1)f_{II,t}(\ww,\bar{\ww})&=&(s+1)\sum_{j=1}^n w_j^{a_j}{w_{j+1}}\{(1-t)|w_j|^{2b_j}+t\}.
\end{eqnarray*}
We will solve the functional equality (\ref{basicequality}) using the inverse mapping theorem.
\begin{lemma}\label{l2}
The Jacobian matrix $J(\Phi_{\ww})$ has rank $n+1$ at $(r_1,\dots, r_n,s)=(1, \dots, 1,0)$,
 $0<t<1$ and 
 $r_0>0$
where
\[
J(\Phi_{\ww})=\left (
\begin{matrix}
\frac{\partial h_1}{\partial r_1}&\cdots&\frac{\partial h_1}{\partial r_{n}}&\frac{\partial h_1}{\partial s}\\
&\vdots&&\\
\frac{\partial h_{n}}{\partial r_1}&\cdots&\frac{\partial h_{n}}{\partial r_{n}}&\frac{\partial h_n}{\partial s}\\
\frac{\partial s}{\partial r_1}&\cdots&\frac{\partial s}{\partial r_n}&\frac{\partial s}{\partial s}
\end{matrix}
\right ).
\]
\end{lemma}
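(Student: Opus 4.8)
The Jacobian $J(\Phi_\ww)$ has a simple block structure because the last row, coming from the coordinate $s$, is $(\partial s/\partial r_1,\dots,\partial s/\partial r_n,\partial s/\partial s)=(0,\dots,0,1)$. Expanding the determinant along this bottom row, I see that $\det J(\Phi_\ww)$ equals the determinant of the upper-left $n\times n$ block $A=(\partial h_i/\partial r_j)_{1\le i,j\le n}$, evaluated at the point $(r_1,\dots,r_n,s)=(1,\dots,1,0)$. So proving the lemma reduces entirely to showing $\det A\neq 0$ at that point; the last column of partial derivatives with respect to $s$ plays no role, and rank $n+1$ is equivalent to $A$ being nonsingular.

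**Computing the block $A$.** Each $h_j$ depends only on the two variables $r_j$ and $r_{j+1}$ (indices mod $n$), so $A$ is a \emph{cyclic two-diagonal} matrix: in row $j$ the only nonzero entries are in columns $j$ and $j+1$. The plan is to compute these two entries at $(1,\dots,1,0)$. Writing $c_j:=(1-t)|w_j|^{2b_j}+t$ and $d_j:=(1-t)|w_j|^{2b_j}$, a direct differentiation gives, at the base point,
\[
\frac{\partial h_j}{\partial r_{j+1}}=c_j,\qquad
\frac{\partial h_j}{\partial r_{j}}=a_j\,c_j+2b_j\,d_j.
\]
Since $0<t<1$ we have $c_j\ge t>0$ strictly positive for every $j$, and $d_j\ge 0$, so the diagonal entry $\alpha_j:=a_jc_j+2b_jd_j$ and the superdiagonal entry $\beta_j:=c_j$ are both positive. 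Thus $A$ is a cyclic bidiagonal matrix with strictly positive entries $\alpha_j$ on the diagonal and $\beta_j$ on the cyclic superdiagonal.

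**Evaluating the cyclic determinant.** For a matrix of this cyclic bidiagonal shape, the determinant has a standard closed form: expanding along the structure (or using that only two permutations contribute, the identity and the full $n$-cycle), one obtains
\[
\det A=\prod_{j=1}^{n}\alpha_j+(-1)^{n+1}\prod_{j=1}^{n}\beta_j
      =\prod_{j=1}^{n}\bigl(a_jc_j+2b_jd_j\bigr)+(-1)^{n+1}\prod_{j=1}^{n}c_j.
\]
The remaining task is to confirm this is nonzero. When $n$ is odd the sign is $+$ and both products are positive, so $\det A>0$ immediately. When $n$ is even the sign is $-$, and I must rule out cancellation. Here assumptions (a) and (b) enter: using $a_j\ge 1$ and $c_j>0$ gives $a_jc_j+2b_jd_j\ge c_j$ termwise, so the first product dominates $\prod c_j$, and this inequality is \emph{strict} precisely because some $a_k\ge 2$ by (b) (or some $b_j\ge 1$ by (a)), forcing at least one factor $a_jc_j+2b_jd_j>c_j$. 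Hence $\prod_j(a_jc_j+2b_jd_j)>\prod_j c_j$, giving $\det A>0$ in the even case as well.

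**Expected obstacle.** The genuinely delicate point is the even-$n$ case, where the two terms carry opposite signs and one must use the hypotheses (a), (b) to guarantee strict dominance of one product over the other; this is exactly where the paper's remark that assumption (b) may be dropped for odd $n$ becomes visible, since for odd $n$ the sign is positive and no strictness argument is needed. I would verify the closed-form cyclic determinant carefully (checking the sign $(-1)^{n+1}$ on the cycle contribution) and confirm that $0<t<1$ is what secures $c_j>0$, since at the endpoints $t=0$ or $t=1$ some $c_j$ could degenerate.
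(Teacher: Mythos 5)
Your proof is correct and takes essentially the same route as the paper: reduce to the cyclic bidiagonal $n\times n$ block (the last row being $(0,\dots,0,1)$), evaluate the two-permutation determinant $\prod_j(a_jc_j+2b_jd_j)+(-1)^{n+1}\prod_j c_j$, and invoke assumption (b) for strict dominance when $n$ is even. One caveat: your parenthetical claim that assumption (a) alone would also force strictness is false in the generality of the lemma, because if $w_j=0$ then $d_j=(1-t)\lvert w_j\rvert^{2b_j}=0$ no matter how large $b_j$ is --- the paper's own equality analysis shows degeneracy occurs exactly when $(w_j,a_j)=(0,1)$ or $(a_j,b_j)=(1,0)$ for every $j$, and only assumption (b) excludes that.
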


\begin{proof} By a direct computation,
the Jacobian matrix of $\Phi_{\ww}$ is given as 
\[
J(\Phi_{\ww})=
\left(
        \begin{array}{@{\,}cccccccc@{\,}}
\alpha_{1,1} & \alpha_{1,2} & 0 & \ldots & 0  & -\beta_{1} \\
0 & \alpha_{2,2} & \alpha_{2,3} & \ddots & \vdots & \vdots \\ 
\vdots & \ddots & \ddots & \ddots & \vdots  & \vdots \\
0 & \ldots & 0 & \alpha_{n-1,n-1} & \alpha_{n-1,n} & \vdots \\
\alpha_{n,1} & 0 & \ldots & 0 & \alpha_{n,n} & -\beta_{n} \\
0 & \ldots & \ldots & \ldots & 0 & 1
\end{array}
       \right), 
\]
where 
\begin{eqnarray*}
\alpha_{j,j} &=& r_{j}^{a_{j}-1}r_{j+1}\{(1-t)\lvert w_{j}\rvert^{2b_{j}}(a_{j} + 2b_{j})r_{j}^{2b_{j}} + a_{j}t \}, \\
\alpha_{j,j+1} &=& r_{j}^{a_{j}}\{(1-t)\lvert w_{j}\rvert^{2b_{j}}r_{j}^{2b_{j}} + t \},\\
\beta_{j}& =& \{(1-t)\lvert w_{j}\rvert^{2b_{j}} + t \},
\qquad j = 1, \dots, n.
\end{eqnarray*}
Since $0 < t < 1$, 
$\alpha_{j,j}$ and $\alpha_{j,j+1}$ are positive real numbers for each $j = 1, \dots, n$. 
The determinant $\det J(\Phi_{\ww})$  is given  as 
\begin{eqnarray}
\det J(\Phi_{\ww})&=&
\alpha_{1,1}\dots \alpha_{n,n} + (-1)^{n+1}\alpha_{1,2}\cdots \alpha_{n-1,n}\alpha_{n,1}  \\
&= &\prod_{j=1}^{n}r_{j}^{a_{j}}\{(1-t)\lvert w_{j}\rvert^{2b_{j}}(a_{j} + 2b_{j})r_{j}^{2b_{j}} 
+ a_{j}t \} \notag\\
&&+ 
(-1)^{n+1}\prod_{j=1}^{n}r_{j}^{a_{j}}\{(1-t)\lvert w_{j}\rvert^{2b_{j}}r_{j}^{2b_{j}} + t \}. \notag
\end{eqnarray}
The proof of  Lemma 1 is reduced to the following assertion.
\begin{assertion} $\det J(\Phi_{\ww})>0$.
\end{assertion} 

{\em Proof.}
(i) If $n$ is an odd number,  $\det J(\Phi_{\ww})$ at $(1, \dots, 1, 0)$ is obviously positive. 
\newline
(ii) Suppose that $n$ is a positive even number. 
Consider 
\[\alpha_{j,j}':=r_{j}^{a_{j}}\{(1-t)\lvert w_{j}\rvert^{2b_{j}}(a_{j} + 2b_{j})r_{j}^{2b_{j}} + a_{j}t \}.
\]
Note that $\prod_{j=1}^n\alpha_{j,j}=\prod_{j=1}^n\alpha_{j,j}'$.
 We have
the following.
\[\begin{split}
\det J (\Phi_{\ww})&=\prod_{j=1}^n \alpha_{j,j}-\prod_{j=1}^n\alpha_{j,j+1}
=\prod_{j=1}^n \alpha_{j,j}'-\prod_{j=1}^n\alpha_{j,j+1}\\
\det J(\Phi_{\ww})&\ge 0\iff\prod_{j=1}^n\frac{\alpha_{j,j}'}{\alpha_{j,j+1} }\ge 1.
\end{split} 
\]
As $\alpha_{j,j}'\ge\alpha_{j,j+1}$, 
the equality takes place if $\alpha_{j,j}'=\alpha_{j,j+1}$ for $j = 1, \dots, n$.
We assume that $\alpha_{j,j}'=\alpha_{j,j+1}$ for any~$j, 1 \leq j \leq n$. Then 
\begin{equation*}
(1-t)\lvert w_{j}\rvert^{2b_{j}}(a_{j} + 2b_{j}) + a_{j}t = (1-t)\lvert w_{j}\rvert^{2b_{j}} + t, \,  j = 1, \dots, n
\end{equation*}
and this is the case if and only if $(w_{j}, a_{j}) = (0, 1)$ or $(a_{j}, b_{j}) = (1, 0)$. 
Thus the Jacobian of $\Phi_{\ww}$ at $(1, \dots, 1, 0)$ is equal to $0$ if and only if 
$(w_{j}, a_{j}) = (0, 1)$ or $(a_{j}, b_{j}) = (1, 0)$ for $j = 1, \dots, n$. 
 However this case does not happen,  as there exists $j$ with $a_j\ge 2$.  Thus
 the assertion is proved. This completes also the proof of Lemma 1.
\end{proof}
Now we are ready to prove the transversality of $S^{2n-1}_{r_0}$ and $V_{ t}$ for any $r_0>0$ and $0\le t\le 1$. 
\subsection{Proof of  Transversality Theorem}
The assertion is known for $t=0, 1$ by \cite{O2}. Thus  we assume that $0<t<1$.
Recall  that $f_{II, t} : \Bbb{C}^{n} \rightarrow \Bbb{C}$ has a unique singularity 
at the origin $O$ for any $0 \leq t \leq 1$ by 
\cite[Lemma 9]{O3}. 
As  the codimension of $T_{{\bf w}}S_{r_0}^{2n-1}$ in $\mathbb C^n\cong \mathbb R^{2n}$ is 1,
to show the transversality, it suffices to show the existence of a vector $\mathbf{v} \in T_{\ww}V_{ t}$ 
with $\mathbf{v}\notin T_{\ww}S^{2n-1}_{r_{0}}$. 

For a given $\ww = (w_{1}, \dots, w_{n}) \in V_{ t}$,  
we consider the nullity set $I_{\ww} = \{ i \mid w_{i} = 0\}$. 

\vspace{.2cm}
\noindent
{\bf Case $1. \ I_{\ww} = \emptyset$}. 
This  is the most essential case and  does not appear for the mixed polynomials 
$f_{\bf a,\bf b}$ and $f_I$. The corresponding graph is cyclic.

By Lemma \ref{l2}, 
the Jacobian of $\Phi_{\ww}$ at $(1,\dots, 1,0)$ is non-zero. 
By the Inverse mapping theorem,
there exist a neighborhood $U \subset \Bbb{R}^{n+1}$ of $(1,\dots, 1,0)$ and 
a neighborhood $W\subset \Bbb{R}^{n+1}$ of $\Phi_{\ww}(1,\dots, 1,0)=(0,\dots,0)$ and a real analytic mapping
\newcommand\id{\rm{id}}
$\Psi_{\ww}=(\psi_1,\dots, \psi_n,\id): W\to U$ so that 
\[\Phi_{\ww}\circ\Psi_{\ww}=\id_{W}\quad\text{and}\quad
\Psi_{\ww}\circ\Phi_{\ww}=\id_{U}.
\]
Put $\mathbf{0} :=(0,\dots,0)\in \Bbb{R}^{n}$ and consider 
 $V \subset \Bbb{R}:=W\cap(\{\bf 0\}\times \mathbb R)$, a neighborhood of $0\in \mathbb R$ and define 
smooth functions $r_{j} : V \rightarrow \Bbb{R}$ of the variable $s$  by $r_j(s):=\psi_j(0,\dots, 0,s)$. 
Note that $r_j(0)=1$. We have the equalities:
\[
h_{j}(r_{1}(s), \dots, r_{n}(s), s) \equiv 0,\, s\in V,\,j = 1, \dots, n. 
\]
As we have seen in the above discussion, this implies
\[\begin{split}
&r_{j}^{a_{j}}(s)r_{j+1}(s)\{(1-t)\lvert w_{j}\rvert^{2b_{j}}r_{j}(s)^{2b_{j}} + t \} - (s+1)\{(1-t)\lvert w_{j}\rvert^{2b_{j}} + t\}\equiv 0,\end{split}
\] which implies 
\[\begin{split}
f_{II, t}(\ww(s), \bar{\ww}(s))& =\sum_{j=1}^{n}r_j(s)^{a_j}r_{j+1}(s)w_{j}^{a_{j}}w_{j+1}
\{(1-t)\lvert w_{j}\rvert^{2b_{j}}r_{j}(s)^{2b_{j}} + t \}\\
&= (s+1)f_{II,t}(\ww,\bar{\ww}).
\end{split}
\]
 Thus  $f_{II,t}(\ww(s),\bar{\ww}(s))\equiv 0$.
 Put ${\bf v}=\frac{d\ww}{ds}(0)$.  We have
${\bf v}\in T_{\ww}V_{t}$ by the definition. 
Now to finish the proof of the transversality assertion,  we need only to show  
\begin{assertion}${\bf v}\ne {\bf 0}$ and ${\bf v}\notin T_{\ww}S_{r_0}^{2n-1}$.
\end{assertion}

To prove the assertion,  we consider the differential in $s$ of 
\begin{multline*}
h_{j}(r_{1}(s), \dots, r_{n}(s), s)= r_{j}(s)^{a_{j}}r_{j+1}(s)\{(1-t)\lvert w_{j}\rvert^{2b_{j}}r_{j}(s)^{2b_{j}}
 + t \} 
 - (s+1)\{(1-t)\lvert w_{j}\rvert^{2b_{j}} + t\}.
\end{multline*}
By a direct computation, 
we get the equality
\begin{equation*}
\begin{split}
\frac{d}{ds}h_{j}(r_{1}(s), \dots, r_{n}(s), s) 
&= \biggl( \sum_{k=1}^{n}\frac{\partial h_{j}}{\partial r_{k}}\frac{dr_{k}}{ds}\biggr) - \beta_{j} \\
&= \alpha_{j,j}\frac{dr_{j}}{ds} + \alpha_{j,j+1}\frac{dr_{j+1}}{ds} - \beta_{j} \equiv 0 
\end{split}
\end{equation*}
where
\[\begin{split}
\alpha_{j,j} &= r_{j}^{a_{j}-1}r_{j+1}\{(1-t)\lvert w_{j}\rvert^{2b_{j}}(a_{j} + 2b_{j})r_{j}^{2b_{j}} + a_{j}t \}, \\
\alpha_{j,j+1} &= r_{j}^{a_{j}}\{(1-t)\lvert w_{j}\rvert^{2b_{j}}r_{j}^{2b_{j}} + t \},\\
\beta_{j}& = \{(1-t)\lvert w_{j}\rvert^{2b_{j}} + t \}
\end{split}
\]
for $j = 1, \dots, n$. 
The above equality can be written as
\begin{eqnarray*}
&A&
       \left(
        \begin{array}{@{\,}cccccccc@{\,}}
        \frac{dr_{1}}{ds} \\
        \vdots \\
        \vdots \\
        \vdots \\
        \frac{dr_{n}}{ds}
        \end{array}
\right) = 
 \left(
        \begin{array}{@{\,}cccccccc@{\,}}
        \beta_{1} \\
        \vdots \\
        \vdots \\
        \vdots \\
        \beta_{n}
        \end{array}
\right)\quad\text{where}\\
A& :=& \left(
        \begin{array}{@{\,}cccccccc@{\,}}
\alpha_{1,1} & \alpha_{1,2} & 0 & \ldots & 0 \\
0 & \alpha_{2,2} & \alpha_{2,3} & \ddots & \vdots \\ 
\vdots & \ddots & \ddots & \ddots & \vdots \\
0 & \ldots & 0 & \alpha_{n-1,n-1} & \alpha_{n-1,n} \\
\alpha_{n,1} & 0 & \ldots & 0 & \alpha_{n,n}
\end{array}
       \right).\\
\end{eqnarray*}
Observe that  the above equality says $\frac{dr_j}{ds}(s)$ is independent of $s$.
By Lemma $1$, the determinant of $A$ is positive. 
We first consider the differential $\frac{dr_{1}}{ds}$
and will show that $\frac{dr_{1}}{ds}(0) \ge 0$. 
Put $m=[n/2]$, the largest integer 
such that $m\le n/2$. By the Cramer's formula, the differential $\frac{dr_{1}}{ds}$ of $r_{1}$ is equal to 
\begin{equation*}
\begin{split}
\frac{dr_{1}}{ds}=
&\frac{1}{\det A}\det\left(
        \begin{array}{@{\,}cccccccc@{\,}}
\beta_{1} & \alpha_{1,2} & 0 & \ldots & 0 \\
\vdots & \alpha_{2,2} & \alpha_{2,3} & \ddots & \vdots \\ 
\vdots & 0 & \ddots & \ddots & \vdots \\
\vdots & \vdots & \ddots & \ddots & \alpha_{n-1,n} \\
\beta_{n} & 0 & \ldots & 0 & \alpha_{n,n}
\end{array}
       \right) \\
&=\frac{1}{\det A}\sum_{j=1}^{n}(-1)^{j-1}A_{j-1}\beta_{j}A'_{j+1} \\
&=
\begin{cases}
\frac{1}{\det A}\sum_{k=1}^m (A_{2k-2}\beta_{2k-1}A_{2k}'-A_{2k-1}\beta_{2k}A_{2k+1}'),\, &n=2m\\
\frac{1}{\det A}\sum_{k=1}^m (A_{2k-2}\beta_{2k-1}A_{2k}'-A_{2k-1}\beta_{2k}A_{2k+1}')+A_{n-1}\beta_nA_{n+1}',\, &n=2m+1
\end{cases}
\end{split}
\end{equation*}
where 
\begin{equation*}
A_{j-1} = \begin{cases}                                             
          1 & j = 1 \\                                                         
          \prod_{\ell = 1}^{j-1}\alpha_{\ell, \ell + 1} & j \geq 2            
          \end{cases}, \ \
A'_{j+1} = \begin{cases}  
\prod_{\ell = j}^{n-1}\alpha_{\ell +1, \ell +1} & j \leq n-1 \\
1 & j = n
\end{cases}.
\end{equation*}
We have
\begin{equation*}
\begin{split}
A_{2k-2}\beta_{2k-1}A_{2k}'-A_{2k-1}\beta_{2k}A_{2k+1}'
= A_{2k-2}A_{2k+1}'(\beta_{2k-1}\alpha_{2k,2k} - \alpha_{2k-1,2k}\beta_{2k}). 
\end{split}
\end{equation*}
As $\alpha_{j,j+1}(1,\dots,1)= \beta_{j}$ for $j = 1, \dots, n$, we observe that 
\begin{equation*}
\begin{split}
&\beta_{2k-1}\alpha_{2k,2k}(1, \dots, 1) - \alpha_{2k-1,2k}(1, \dots, 1)\beta_{2k} \\
= &\beta_{2k-1}\{\alpha_{2k,2k}(1, \dots,1) - \beta_{2k}\} \\
= &\beta_{2k-1}\{(1-t)\lvert w_{2k}\rvert^{2b_{2k}}(a_{2k}+2b_{2k}) + a_{2k}t - (1-t)\lvert w_{j+1}\rvert^{2b_{j+1}} - t\}\\
 =&\beta_{2k-1}\{(1-t)\lvert w_{2k}\rvert^{2b_{2k}}(a_{2k}+2b_{2k}-1) +( a_{2k}-1)t\}\geq 0. 
\end{split}
\end{equation*}
The equality holds only if $a_{2k}=1$ and 
 $b_{2k}=0$. Note that $w_i\ne 0$ for any $i=1,\dots, n$ by the assumption.
Anyway we have
\[
\frac{dr_{1}}{ds}(0) \geq  0.
\]
If $n$ is an odd integer,  we see that  $\frac{dr_{1}}{ds}(0)>0$ by the last unpaired term: 
$\frac{dr_{1}}{ds}(0)\ge A_{n-1}\beta_nA_{n+1}'>0$. 
If there exists some $k$ such that $a_{2k}\ge 2$, we have also the strict inequality: $\frac{dr_{1}}{ds}(0)>0$.

Next we consider $\frac{dr_{k}}{ds}$ for $k \geq 2$. 
First observe that our polynomial $f_{II,t}$ has a symmetry for the cyclic permutation of the coordinates
$\si=(1,2,\dots,n)$.
Secondly after cyclic change of coordinates, say
${\bf z}'=(z_1',\dots, z_n')=(z_{\si ^i(1)},\dots, z_{\si ^i(n)})$, the equality (3)  does not change.
That is,  ${\bf w}'(s)=(r_{\si^i(1)}w_{\si ^i(1)},\dots, r_{\si^i (n)} w_{\si^i(n)})$ is the obtained  solution curve.
The tangent vector ${\bf v}' = \frac{d\ww'}{ds}(0)$ is also equal to ${\bf v}$ after 
the corresponding cyclic permutation of coordinates.
Therefore we can apply the above argument to have
the inequality
$\frac{dr_{\si ^i(1)}}{ds}(0)~\ge 0$ for any $i$. 
As we have some $j$ with $a_j\ge 2$, this implies
\[
\frac{dr_{j-1}}{ds}(0)>0.
\]
Now we are ready to show that $\bf{ v} \ne 0$ and ${\bf v}\notin T_{\ww} S_{r_0}^{2n-1}$.
By the assumption of $\ww$, 
the path $\ww(s)$ satisfies 
\begin{eqnarray*}
f_{II, t}(\ww(s), \bar{\ww}(s))& =& (s+1)f_{II, t}(\ww, \bar{\ww}) \equiv 0, \\
\frac{d\| \ww(s) \|^{2}}{ds}\lvert_{s=0} &=& 2\sum_{j=1}^{n}r_{j}(0)\frac{dr_{j}}{ds}(0)\lvert w_{j}\rvert^{2} 
=2\sum_{j=1}^{n}\frac{dr_{j}}{ds}(0)\lvert w_{j}\rvert^{2} > 0. 
\end{eqnarray*}
This implies that $\bf{ v} \ne 0$ and ${\bf v}\notin T_{\ww} S_{r_0}^{2n-1}$.

\vspace{.2cm}
\noindent
{\bf Case 2. } Now we consider the case $\ I_{\ww} \neq \emptyset$. 
Put $I_{\ww}^c$ be the complement of $I_{\ww}$ and  $\Bbb{C}^{*I_{\ww}^{c}} = \{\zz \in \Bbb{C}^{n} \mid z_{i} = 0, i \in I_{\ww} \}$.  We consider the mixed polynomial
$f'(\zz, \bar{\zz}) = f_{II,t}\lvert_{\Bbb{C}^{*I_{\ww}^{c}}}$. 
Let $J$ be the set of indices $j$ for which $z_j$ or $\bar z_j$ appears in $f'$. Note that 
$J\subset I_{\ww}^c$ but it can be a proper subset.

{Case 2-1.} Assume that 
$f' \equiv 0$, i.e., $J=\emptyset$.
We take simply a real analytic path as follows: 
\[
\ww(s) = (s+1)\ww
\]
for $s \in \Bbb{R}$. Since $\ww \in V_{t} \setminus \{ O\}$, we observe that 
\[
f_{II, t}(\ww(s), \bar{\ww}(s)) \equiv 0, \ \ \ \frac{d\| \ww(s) \|^{2}}{ds}\lvert_{s=0} = 2\| \ww \|^{2} > 0. 
\]

{Case 2-2.} Assume that 
$f' \not \equiv 0$.
 Then using the connected components of the graph of~$f'$, we can express $f'$ uniquely as follows.
\[
f'(\zz, \bar{\zz}) = f_{1}(\zz_{I_{1}}) + \cdots f_{k}(\zz_{I_{k}}) 
\]
where the graph of $f_i$ is a bamboo and the variables of $f_i,f_j,\,i\ne j$ are disjoint and the above expression is a join type expression.
Here  $I_{i}$ be the set of indices of variables of $f_{i}$
and $\zz_{I_{i}}=(z_j)_{j\in I_i}$ are the  variables of $f_{i}$ for $i = 1, \dots, k$. 
We have the equality
$\cup_{i=1}^{k}I_{i} =J$ and $I_{i} \cap I_{j} = \emptyset$ for $i \neq j$. 
Put $\Bbb{C}^{I_{i}} = \{\zz \in \Bbb{C}^{n} \mid z_{j} = 0, j \not\in I_{i} \}$.
Fixing $i$, we will construct a curve $\ww_{I_i}(s)$ on $\mathbb C^{*I_i}$ so that 
\[
f_i(\ww_{I_i}(s))=(s+1)f_i(\ww_{I_i}).
\]
The construction of the curve $\ww_{I_i}(s)$ can be reduced to the argument of \cite[Lemma 10]{O3}.  
We will give briefly the  proof which is based on 
 the argument of \cite{O3}. 

For $j\notin J$, we put $w_j(s)=w_j$ and $\ww_{J^c}(s)=\ww_{J^c}\in \mathbb C^{J^c}$ 
where $\Bbb{C}^{J^c} = \{\zz \in \Bbb{C}^{n} \mid z_{j'}~=~0, j' \in J \}$. 
Here $\ww_{J^c}$ is the projection of $\ww$ to $\mathbb C^{J^c}$.
For each $i=1,\dots,k$, we will construct a curve $\ww_{I_i}(s)$ on $\Bbb{C}^{I_{i}}$ 
and  define $\ww_J(s)=\ww_{I_1}(s)+\cdots+\ww_{I_k}(s)$.
Finally we define   a curve $\ww(s)=\ww_{J^c}+\ww_J(s)\in \Bbb{C}^{n}$ so that
\begin{eqnarray*}
f_i(\ww_{I_i}(s))&=&(s+1)f_{i}(\ww_{I_i}),\\
f_{II,t}(\ww(s))&=&f'(\ww(s))\\
&=& f_1(\ww_{I_1}(s))+\cdots+f_k(\ww_{I_k}(s))\\
&=& (s+1)\{f_1(\ww_{I_1})+\cdots+f_k(\ww_{I_k})\}\\
&=&(s+1)f_{II,t}(\ww)\equiv 0 .
\end{eqnarray*}
So we fix $i$.  
For simplicity's sake, 
we  assume $I_{i} = \{j \mid \nu_{i} \leq j \leq \mu_{i}\}$ with $\mu_i\le n$.
The last assumption $\mu_i\le n$ is  for the simplicity of the indices.
This implies that
\[
f_i(\zz_{I_i})=\sum_{j=\nu_i}^{\mu_i-1} z_j^{a_{j}}z_{j+1} \{ |z_j|^{2b_j}(1-t)+t \}.
\]
We will show that there exists a differentiable positive real-valued function solution 
$(r_{\nu_i}(s), \dots, r_{\mu_i}(s))$ of the following equation so that 
$w_j(s)=r_j(s)w_j$, $j\in I_i$ and 
\[
\begin{split}
w_j^{a_{j}}(s)w_{j+1}(s)\{|w_j(s)|^{2b_j}(1-t)+t\}
&=(s+1)w_j^{a_{j}}w_{j+1}\{|w_j|^{2b_j}(1-t)+t\}
\end{split}
\]
for $j=\nu_i,\dots, \mu_i-1$.
We first  consider the equality
\[
\begin{split}
(E''_{j}):\quad \,\,
r_{j}^{a_{j}}\{ \lvert w_{j}\rvert^{2b_{j}}r_{j}^{2b_{j}}(1-t) + t\} &= s_{j}\{ \lvert w_{j}\rvert^{2b_{j}}(1-t) + t\} \\
\text{where}\,\,s_{j} &:= (s+1)/r_{j+1}, \quad\nu_i \leq j\leq \mu_i-1.
\end{split}
\]
First we define $r_{\mu_i}=1$ to start with. 
The left side of  ($E_j''$) is a monotone increasing function of $r_j>0$. Thus assuming $s_j>0$
and considering $s_j$ as an independent variable, we can solve 
 ($E_j''$) in $r_j$  as a function of $s_j$.  Thus we put $r_j=\psi_j(s_j) $.
We claim 
\begin{assertion}
\begin{eqnarray}
 \psi_{j}(1)&=&1, \ \ \frac{d\psi_{j}}{ds_{j}}(s_{j}) > 0,\label{eq7} \\
  \psi_{j}(s_{j})^{a_{j}} &\leq&  s_{j} 
  ,\,\,j = i_{1}, \dots, i_{\ell -1}. \label{eq8}
\end{eqnarray}
\end{assertion}
\begin{proof} 
For $j=\mu_i-1$, the assertion is obvious.  
Assume that $ j<\mu_i-1$.  The assertion (\ref{eq7}) is obvious. 
The assertion (\ref{eq8}) follows from (\ref{eq7}), as  $\psi(s_j)$ is monotone increasing on $s_j$
and 
\[
\lvert w_{j}\rvert^{2b_{j}}r_{j}^{2b_{j}}(1-t) + t\ge \lvert w_{j}\rvert^{2b_{j}}(1-t) + t,\,\,r_j\ge 1.
\]
\end{proof}
 Now we define  $s_{j}(s)$ and $r_{j}(s)$ inductively from  $j=\mu_i$ downward (more precisely from the right end vertex of the graph to the left)
 as follows: 
\begin{eqnarray*}
 r_{\mu_i}(s)=1, \ \
s_{j}(s) = (s+1)/r_{j+1}(s),\, r_{j}(s) = \psi_{j}(s_{j}(s)) 
\end{eqnarray*}
for $\nu_i\le j\le \mu_i-1$. 
\begin{assertion}
$s_{j}(s) \geq 1$ and $r_{j}(s) \geq 1$ for $j = \nu_i, \dots, \mu_i-1$ and $s\geq0$. 
\end{assertion}
\begin{proof}
We show the assertion by a downward induction. 
For $j=\mu_i-1$, the assertion is obvious.  
By the inequality (\ref{eq8}),  we have for $j<\mu_i-1$
\[\begin{split}
s_{j}(s)^{a_{j+1}}&=\left ( \frac{s+1}{r_{j+1}(s)}\right )^{a_{j+1}} = 
\frac{(s+1)^{a_{j+1}}}{\psi_{j+1}(s_{j+1}(s))^{a_{j}+1}}\\
& \geq \frac{(s+1)^{a_{j+1}}}{s_{j+1}(s)} = 
(s+1)^{a_{j+1}-1}r_{j+2}(s)  
\end{split}
\]
for 
$s\ge 0$. 
By the definition of $r_{j}(s)$ and Assertion $3$, 
$s_{j}(s) \geq 1$ and $r_{j}(s) \geq 1$ for $j = \nu_i, \dots, \mu_i-1$ and $s \geq 0$. 
\end{proof}
By Assertion $3$ and Assertion $4$, we see easily  that 
\[\begin{cases}
&\frac{dr_{\mu_i-1}}{ds}(0) > 0\\
&\frac{dr_{j}}{ds}(0) \geq 0,\,\,  j =\nu_i, \dots,\mu_i-2.
\end{cases}
\]
Now we define the  curve $\ww_{I_{i}}(s) $ on $\Bbb{C}^{I_{i}}$  by 
\[
w_{j}(s) = 
          r_{j}(s)w_{j},\quad  j \in I_{i}.                                                   
\]  
As a vector in $\mathbb C^n$, the other coefficients of $\ww_{I_i}(s)$ are defined to be zero.
Then by the construction we have 
\begin{equation*}
\begin{split}
\ww_{I_{i}}(0)=\ww_{I_{i}}, \ \ \ 
&f_{i}(\ww_{I_{i}}(s), \bar{\ww}_{I_{i}}(s)) = (s+1)f_{i}(\ww_{I_{i}}, \bar{\ww}_{I_{i}}), \\
&\frac{d\| \ww_{I_{i}}\|^{2}}{ds}\lvert_{s=0} \geq 2\frac{dr_{\mu_{i}-1}}{ds}(0)\lvert w_{\mu_{i}-1}\rvert^{2} > 0 
\end{split}
\end{equation*}
where $\lvert s \rvert \ll 1$ and $1 \leq i \leq k$. 
After  constructing $\ww_{I_i}(s)$ for each $ i=1,\dots, k$, we define a~smooth curve
$\ww(s)=(w_1(s),\dots, w_n(s))$ by the summation
\[\begin{split}
\ww(s)&=\ww_{J^c}(s)+\ww_J(s),\\
\ww_J(s)&=\ww_{I_1}(s)+\cdots+\ww_{I_k}(s).
\end{split}
\]
Then $\ww(s)$ satisfies 
\begin{equation*}
\begin{split}
f_{II,t}(\ww(s))&=
f'(\ww(s), \bar{\ww}(s))
= \sum_{i=1}^{k}f_{i}(\ww_{I_{i}}(s), \bar{\ww}_{I_{i}}(s))\\
& = (s+1)\sum_{i=1}^{k}f_{i}(\ww_{I_{i}}, \bar{\ww}_{I_{i}}) = (s+1)f'(\ww, \bar{\ww})\\
&=(s+1)f_{II,t}(\ww,\bar\ww)\equiv 0, \\
\frac{d\| \ww(s)\|^{2}}{ds}\lvert_{s=0} &= \sum_{i=1}^{k}\frac{d\| \ww_{I_{i}}(s)\|^{2}}{ds}\lvert_{s=0} > 0. 
\end{split} 
\end{equation*}
Thus defining $\mathbf{v} := \frac{d \ww}{ds}(0)$, we conclude  
$\mathbf{v}\in T_{\ww}V_{t}\setminus T_{\ww}S^{2n-1}_{r_{0}}$. 
This completes the proof of the transversality.
\begin{remark}
In the above argument,  if $v_n$ is a vertex of the graph of $f_i$ and it is not the right end vertex,
we  use the expression
 $I_{i} = \{j \mod\, n\mid \nu_{i} \leq j \leq \mu_{i}\}$ with $\mu_i> n$.
This implies that
\[
f_i(\zz_{I_i})=\sum_{j=\nu_i}^{\mu_i-1} z_j^{a_{j}}z_{j+1} \{ |z_j|^{2b_j}(1-t)+t \}
\]
where $z_{j+n}=z_j, a_{j+n}=a_j, b_{j+n}=b_j$.
We do the same argument as above starting the right end variable $z_{\mu_i}=z_{\mu_i-n}$.
\end{remark}
\subsection{Applications}

\begin{corollary} \label{Cor1}
Let $V_t$ be the hypersurface defined by $f_{II,t}$ and let $K_{t,r}$ be its link.
Then there exists an isotopy 
$\psi_{t} : (S^{2n-1}_{r}, K_{0,r}) \rightarrow (S^{2n-1}_{r}, K_{t,r})$ for $0\le t\le 1$ with $\psi_0=\id$. 
\end{corollary}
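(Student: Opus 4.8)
The plan is to realize the family of links as the fibers of a proper submersion and then to integrate a suitably controlled vector field, in the spirit of Ehresmann's fibration theorem and Thom's first isotopy lemma. First I would introduce the incidence variety
\[
W=\{(\zz,t)\in S^{2n-1}_{r}\times[0,1]\mid f_{II,t}(\zz,\bar\zz)=0\},
\]
together with the projection $\pi:W\to[0,1]$, $(\zz,t)\mapsto t$. Since $S^{2n-1}_{r}\times[0,1]$ is compact and $W$ is closed in it, $W$ is compact and $\pi$ is proper. The key point is that Transversality Theorem \ref{Transversality} says exactly that, for each fixed $t$, the map $\zz\mapsto f_{II,t}(\zz,\bar\zz)$ on $S^{2n-1}_{r}$ has $0$ as a regular value; in particular $0$ is a regular value of the single map $(\zz,t)\mapsto f_{II,t}(\zz,\bar\zz)$ on $S^{2n-1}_{r}\times[0,1]$, so $W$ is a smooth compact submanifold with boundary $\pi^{-1}(\{0,1\})$. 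Moreover, because the $\zz$-derivative of $f_{II,t}$ already has rank $2$ at every point of $K_{t,r}$, one can solve away the $t$-derivative and produce a tangent vector of $W$ with nonzero $t$-component; hence $\pi$ is a submersion onto $[0,1]$, with fibers $\pi^{-1}(t)=K_{t,r}\times\{t\}$.

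Next I would construct a controlled vector field on $M:=S^{2n-1}_{r}\times[0,1]$. Because $\pi$ is a submersion, there is a smooth vector field $X_{W}$ on $W$ (automatically tangent to $W$) with $d\pi(X_{W})=\partial/\partial t$; it is assembled from local sections of $\pi$ by a partition of unity, its $t$-component being identically $1$. Let $Z$ denote the reference lift $\partial/\partial t$ coming from the product structure of $M$. Along $W$ the difference $X_{W}-Z|_{W}$ is vertical, i.e.\ tangent to the sphere slices; extending this vertical field from $W$ to a global vertical field $C$ on $M$ and setting $\widetilde X:=Z+C$ produces a smooth vector field on $M$ whose $t$-component is identically $1$ and which satisfies $\widetilde X|_{W}=X_{W}$. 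In particular $\widetilde X$ is a lift of $\partial/\partial t$ and is tangent to $W$ along $W$.

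Finally I would integrate $\widetilde X$. Because the $t$-component of $\widetilde X$ is identically $1$ and $S^{2n-1}_{r}$ is compact, the flow $\Psi_{\tau}$ of $\widetilde X$ is defined for all $\tau\in[0,1]$ and sends the slice $S^{2n-1}_{r}\times\{0\}$ diffeomorphically onto $S^{2n-1}_{r}\times\{\tau\}$; writing $\Psi_{t}(\zz,0)=(\psi_{t}(\zz),t)$ defines the desired isotopy $\psi_{t}:S^{2n-1}_{r}\to S^{2n-1}_{r}$ with $\psi_{0}=\Id$. Since $\widetilde X$ is tangent to $W$ along $W$, its flow preserves $W$, so $\psi_{t}$ carries $\pi^{-1}(0)=K_{0,r}$ onto $\pi^{-1}(t)=K_{t,r}$, giving $\psi_{t}:(S^{2n-1}_{r},K_{0,r})\to(S^{2n-1}_{r},K_{t,r})$. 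The main obstacle is the construction of the controlled field $\widetilde X$: one must keep it simultaneously tangent to $W$ and a lift of $\partial/\partial t$, and it is precisely the transversality of Theorem \ref{Transversality} (smoothness of $W$ and submersivity of $\pi$) that makes the two conditions compatible, while completeness of the flow over the closed interval rests on the compactness of the sphere.
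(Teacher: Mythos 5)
Your proposal is correct and is essentially the paper's argument: the paper deduces the corollary in one line from Ehresmann's fibration theorem applied to the proper projection $(\zz,t)\mapsto t$ on the cobordism variety $\{(\zz,t)\in S^{2n-1}_{r}\times[0,1]\mid f_{II,t}(\zz,\bar\zz)=0\}$, whose smoothness and submersivity come from the Transversality Theorem exactly as you argue. Your construction of the controlled lift $\widetilde X$ of $\partial/\partial t$ tangent to $W$ and its integration is just the standard proof of the relative Ehresmann theorem, i.e.\ you have opened the black box that the paper cites.
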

This is immediate from  Ehresmann's fibration theorem (\cite{W}). 
As for the Milnor fibration of the second type, we have:
\begin{corollary} 
For a fixed $r>0$, 
there exists a positive real number~$\eta_{0}$ so that 
$f_{II,t}^{-1}(\eta)$ and $S^{2n-1}_{r}$ intersect transversely for any $\eta$, $\lvert \eta \rvert \leq \eta_{0}$ and $0\le t\le 1$.  
In particular this implies that 
there exists a  family of diffeomorphisms $\psi_t:  \partial E_{0}(\eta_{0}, r)\to  \partial E_{t}(\eta_{0}, r)$ such that 
the following diagram is commutative: 
\def\mapright#1{\smash{\mathop{\longrightarrow}\limits^{{#1}}}}
\def\mapdown#1{\Big\downarrow\rlap{$\vcenter{\hbox{$#1$}}$}}
\[\begin{matrix}
\partial E_{0}(\eta_{0}, r)&\mapright{\psi_t}& \partial E_{t}(\eta_{0}, r) \\
\mapdown{f_{II,0}}&&\mapdown{f_{II,t}}\\
S^{1}_{\eta_{0}}&=& S^{1}_{\eta_{0}} 
\end{matrix}\]
where $\partial E_{t}(\eta_{0}, r) = \{\zz \in \Bbb{C}^{n} \mid \lvert f_{II, t}(\zz)\rvert = \eta_{0}, \| \zz \| \leq r \}$. 
\end{corollary}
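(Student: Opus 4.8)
The plan is to deduce both assertions from the Transversality Theorem in two stages: an openness--compactness argument producing $\eta_0$, followed by an Ehresmann-type trivialization of the resulting tube family over the parameter $t$.

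\emph{Existence of $\eta_0$.} First I would recall that $f_{II,t}$ has its only singularity at $O$ for every $0\le t\le 1$ by \cite[Lemma 9]{O3}, so $f_{II,t}$ is a submersion onto $\mathbb C$ at each $\zz\ne O$; in particular $\nabla\Re f_{II,t}(\zz)$ and $\nabla\Im f_{II,t}(\zz)$ are linearly independent on $S^{2n-1}_{r}$. Writing $\eta=f_{II,t}(\zz)$, the fibre $f_{II,t}^{-1}(\eta)$ is transversal to $S^{2n-1}_{r}$ at $\zz$ if and only if the three gradients $\nabla\Re f_{II,t}(\zz),\ \nabla\Im f_{II,t}(\zz),\ \nabla\rho(\zz)$ are linearly independent, an open condition in $(\zz,t)$ since these vectors depend continuously on $(\zz,t)$. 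Let $\Sigma\subset S^{2n-1}_{r}\times[0,1]$ be the closed, hence compact, set where these gradients are dependent. The Transversality Theorem says precisely that $\Sigma$ is disjoint from $\{(\zz,t)\mid f_{II,t}(\zz)=0\}$. If no $\eta_0$ worked, I would produce a sequence $(\zz_k,t_k)\in\Sigma$ with $f_{II,t_k}(\zz_k)\to 0$; by compactness a subsequence converges to some $(\zz_\infty,t_\infty)\in\Sigma$ with $f_{II,t_\infty}(\zz_\infty)=0$, contradicting the Transversality Theorem. This yields $\eta_0>0$ with transversality for all $|\eta|\le\eta_0$ and all $t$.

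\emph{The family of diffeomorphisms.} Set
\[
\mathcal E=\{(\zz,t)\mid |f_{II,t}(\zz)|=\eta_0,\ \|\zz\|\le r,\ 0\le t\le 1\}
\]
and consider $\Phi:\mathcal E\to S^1_{\eta_0}\times[0,1]$, $(\zz,t)\mapsto(f_{II,t}(\zz),t)$. I would verify that $\mathcal E$ is a compact manifold with corners whose codimension-one face is $\{\|\zz\|=r\}$, and that $\Phi$ is a proper submersion restricting to a submersion on that face: on the interior $\|\zz\|<r$ this holds because $f_{II,t}$ is a submersion onto $\mathbb C$ (here $\zz\ne O$ as $|f_{II,t}(\zz)|=\eta_0>0$), while on the face $\|\zz\|=r$ the transversality of the first stage is exactly the statement that $f_{II,t}|_{S^{2n-1}_{r}}$ is a submersion onto $\mathbb C$, hence onto $S^1_{\eta_0}$. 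Applying Ehresmann's fibration theorem for manifolds with boundary \cite{W} exhibits $\Phi$ as a locally trivial bundle over $S^1_{\eta_0}\times[0,1]$; since $[0,1]$ is contractible, a trivialization covering the identity on $S^1_{\eta_0}$ and equal to the identity over $t=0$ restricts, slice by slice, to diffeomorphisms $\psi_t:\partial E_0(\eta_0,r)\to\partial E_t(\eta_0,r)$ with $f_{II,t}\circ\psi_t=f_{II,0}$ and $\psi_0=\mathrm{id}$, which is the asserted commutative diagram.

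The main obstacle is the corner stratum $\{\|\zz\|=r\}$: one must confirm that $\mathcal E$ is a genuine manifold with corners and that $\Phi$ is submersive \emph{along} this face, so that the trivializing vector field lifting $\partial/\partial t$ can be chosen tangent to the boundary while annihilating the $S^1_{\eta_0}$-direction. This is exactly where the transversality established for all $|\eta|\le\eta_0$ is indispensable, as it is what makes the two conditions $|f_{II,t}|=\eta_0$ and $\|\zz\|=r$ cut $\mathcal E$ out cleanly and keeps $f_{II,t}|_{S^{2n-1}_{r}}$ a submersion to $S^1_{\eta_0}$. Once this is secured, constructing the $f_{II,t}$-preserving vector field and integrating it is routine.
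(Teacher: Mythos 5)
Your proposal is correct and follows essentially the same route as the paper: a compactness/openness argument on $S^{2n-1}_{r}\times[0,1]$, using the Transversality Theorem to keep the degeneracy locus away from the zero level, produces $\eta_0$, and then Ehresmann's fibration theorem trivializes the resulting tube family over the parameter interval. Your one refinement is to apply Ehresmann to the map $(\zz,t)\mapsto(f_{II,t}(\zz),t)$ with target $S^{1}_{\eta_0}\times[0,1]$, which makes the commutativity $f_{II,t}\circ\psi_t=f_{II,0}$ automatic, whereas the paper applies it to the projection $\pi'\colon(\partial\mathcal{E}(\eta_0,r),\partial^{2}\mathcal{E}(\eta_0,r))\to[0,1]$ and asserts that the induced trivialization consists of isomorphisms of fibrations; this is a sharpening of detail, not a different method.
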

\begin{proof}
Fix a positive real number $r$. Let 
\[\begin{split}
\partial \mathcal{E}(\eta_{0}, r)& := \{(\zz, t) \in \Bbb{C}^{n} \times [0, 1] \mid \lvert f_{II,t}(\zz)\rvert = \eta_{0}, \| \zz \| \leq r \}\\
\partial ^2\mathcal{E}(\eta_{0}, r)& := \{(\zz, t) \in \Bbb{C}^{n}\times [0, 1] \mid \lvert f_{II,t}(\zz)\rvert = \eta_{0}, \| \zz \| = r \}.
\end{split}
\]
Since $S^{2n-1}_{r}$ intersects with $V_{t}$ transversely and 
$S^{2n-1}_{r} \cap V_{t}$ is compact for any $0 \leq t \leq 1$, 
there exists a positive real number $\eta_{0}$ such that 
$f_{II,t}^{-1}(\eta)$ and $S^{2n-1}_{r}$ intersect transversely for any $\eta, \lvert \eta \rvert \leq \eta_{0}$ 
and $0 \leq t \leq 1$. 
Thus the projection 
$\pi' :( \partial \mathcal{E}(\eta_{0}, r),\partial^2\mathcal{E}(\eta_{0}, r)) \rightarrow [0, 1]$ 
is a proper submersion. 
By the Ehresmann's fibration theorem \cite{W}, 
$\pi'$ is a locally trivial fibration over $[0, 1]$. 
So the projection 
$\pi'$ induces 
a family of isomorphisms $\psi_{t} : \partial E_{0}(\eta_{0}, r) \rightarrow \partial E_{t}(\eta_{0}, r)$ 
of fibrations 
for any $\zz$ with $\lvert f_{II,t}(\zz) \rvert \leq \eta_{0}$ and $0 \leq t \leq 1$. 
\end{proof}
Now we consider again Milnor fibration of the link complement. Consider the mapping
\begin{eqnarray}\label{spherical-Milnor}
f_{II,t}/|f_{II,t}|:\,\, S_r^{2n-1}\setminus K_{t,r}\to S^1.
\end{eqnarray}
As $f_{II,t}(\zz,\bar\zz)$ is polar weighted homogeneous polynomial,  the $S^1$-action gives non-vanishing 
vector field, denoted as $\frac{\partial}{\partial \theta}$ on $S_r^{2n-1}\setminus K_{t,r}$ so that 
$f_{II,t}(s\circ \zz)=s^{d_p}f_{II,t}(\zz)$ for $s\in S^1$, 
this gives fibration structure for (\ref{spherical-Milnor})  for any $r>0$ and we call it 
{\em a spherical Milnor fibration or a Milnor fibration of the first description.}
The isomorphism class of the fibration does not depend on $r$.
Consider  two fibrations 
\[
f_{II, t} : \partial E_{t}(\eta_{0}, r) \rightarrow S^{1}_{\eta_{0}}, \ \ \
f_{II, t}/\lvert f_{II, t}\rvert : S^{2n-1}_{r} \setminus K_{t, r} \rightarrow S^{1}.
\]
The first fibration is called {\em a Milnor fibration of the second description or a tubular Milnor fibration.}
The isomorphism class of the tubular fibration does not depend on the choice of $r$ and $\eta_0\ll r$. 
As we know that two fibrations
are isomorphic for sufficiently small $r > 0$ and any $t$ (\cite[Theorem 36]{O2}), they are isomorphic for any $r$. 
Combining this and Corollary 2, we can sharpen Corollary \ref{Cor1} as follows.
\begin{corollary}
Let $\psi_{t} : (S^{2n-1}_{r}, K_{0,r}) \rightarrow (S^{2n-1}_{r}, K_{t,r})$ be an isotopy in Corollary 1.
$\psi_t$ can be constructed so that 
 the following diagram is commutative.
\def\mapright#1{\smash{\mathop{\longrightarrow}\limits^{{#1}}}}
\def\mapdown#1{\Big\downarrow\rlap{$\vcenter{\hbox{$#1$}}$}}
\[\begin{matrix}
S^{2n-1}_{r}\setminus K_{0,r} & \mapright{\psi_{t}} & S^{2n-1}_{r}\setminus  K_{t,r}\\
\mapdown{f_{II,0}/\lvert f_{II,0}\rvert} && \mapdown{f_{II,t}/\lvert f_{II,t}\rvert}\\
S^{1} & \mapright{id} & S^{1}
\end{matrix}\]
\end{corollary}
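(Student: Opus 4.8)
The plan is to transport the commuting isotopy of the tubular (second) description furnished by Corollary 2 to the spherical (first) description, through the isomorphism between the two descriptions carried out uniformly in the parameter $t$. Concretely, Corollary 2 produces a family of fibration isomorphisms $\psi_t:\partial E_0(\eta_0,r)\to \partial E_t(\eta_0,r)$ commuting with the tubular Milnor fibrations $f_{II,t}:\partial E_t(\eta_0,r)\to S^1_{\eta_0}$, and the total space $\partial E_t(\eta_0,r)=\{\zz\mid |f_{II,t}(\zz)|=\eta_0,\ \|\zz\|\le r\}$ is compact, so Ehresmann's theorem applies on the nose there. It therefore suffices to produce, for each $t$, an isomorphism $\Lambda_t$ from the tubular fibration to the spherical fibration $f_{II,t}/|f_{II,t}|:S^{2n-1}_r\setminus K_{t,r}\to S^1$, depending continuously on $t$ and covering the canonical identification $S^1_{\eta_0}\cong S^1$; the conjugate $\psi_t':=\Lambda_t\circ\psi_t\circ\Lambda_0^{-1}:S^{2n-1}_r\setminus K_{0,r}\to S^{2n-1}_r\setminus K_{t,r}$ will then commute with the spherical fibrations and give the asserted diagram.

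First I would recall the construction of the isomorphism between the two descriptions. Because $f_{II,t}$ is polar weighted homogeneous, the $S^1$-action produces the non-vanishing vector field $\partial/\partial\theta$ on $S^{2n-1}_r\setminus K_{t,r}$, and the radial flow together with a cut-off interpolates between the region $\{\|\zz\|=r,\ |f_{II,t}|\ge \eta_0\}$ and the tube boundary $\partial E_t(\eta_0,r)$, exactly as in \cite[Theorem 36]{O2}. The point is that all the ingredients of this construction, namely the transversality of $S^{2n-1}_r$ with $V_t$ and with the level sets $f_{II,t}^{-1}(\eta)$ for $|\eta|\le\eta_0$, hold uniformly for $0\le t\le1$ by the Transversality Theorem and by Corollary 2. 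Viewing $f_{II,t}(\zz)$ as a map $G:S^{2n-1}_r\times[0,1]\to\Bbb{C}$, the $\zz$-derivative alone is already surjective wherever $G=0$, so $\mathcal K=G^{-1}(0)=\bigcup_{t}K_{t,r}\times\{t\}$ is a smooth submanifold of $S^{2n-1}_r\times[0,1]$ fibering properly over $[0,1]$, and the interpolating construction can be run in the $t$-family, yielding the desired continuous family $\Lambda_t$.

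The main obstacle is the behaviour across the moving link $K_{t,r}$, where $f_{II,t}/|f_{II,t}|$ is undefined: one must check that $\psi_t'$ extends to an isotopy of the pairs $(S^{2n-1}_r,K_{t,r})$ agreeing, up to isotopy, with the $\psi_t$ of Corollary \ref{Cor1}. For this I would use that, by the uniform transversality just recorded, $\mathcal K$ admits a tubular neighbourhood in $S^{2n-1}_r\times[0,1]$ on which the family of fibrations is standard, that is, a product of the link with a fibred disk, so the parametrized trivialization extends continuously over $\mathcal K$. Integrating the resulting lift of $\partial/\partial t$ from $0$ to $t$ then gives an isotopy of the sphere carrying $K_{0,r}$ to $K_{t,r}$ and commuting with $f_{II,t}/|f_{II,t}|$ on the complement, which is precisely the sharpened isotopy of Corollary \ref{Cor1}.
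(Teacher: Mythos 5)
Your plan has the right ingredients (uniform transversality in $t$, a standard fibered structure near the moving link, and a flow argument), but as written there are two genuine gaps, both concentrated exactly where the content of this corollary lies, namely at the link. First, the conjugating maps $\Lambda_t$ do not exist in the form you use them: the tube $\partial E_t(\eta_0,r)$ is a compact manifold with boundary while $S^{2n-1}_r\setminus K_{t,r}$ is an open manifold, so there is no fibration isomorphism between these total spaces. What the equivalence of the two descriptions (\cite[Theorem 36]{O2}) actually provides is a fiber-preserving diffeomorphism of $\partial E_t(\eta_0,r)$ onto the complement in $S^{2n-1}_r$ of the \emph{open} fibered neighborhood $T_t=\{\zz\in S^{2n-1}_r \mid |f_{II,t}(\zz)|<\eta_0\}$ of $K_{t,r}$. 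Hence your $\psi_t'=\Lambda_t\circ\psi_t\circ\Lambda_0^{-1}$ is only defined on $S^{2n-1}_r\setminus T_0$, and the entire difficulty is pushed into the extension over $T_0$. Second, that extension is not established by your last paragraph. It is true that Corollary 2 plus Ehresmann applied to $(\zz,t)\mapsto (f_{II,t}(\zz),t)$ gives a product structure $K_{0,r}\times \bar{D}_{\eta_0}\times[0,1]$ on $\{(\zz,t)\mid |f_{II,t}(\zz)|\le\eta_0\}$ in which the fibrations are standard; but the product lift of $\partial/\partial t$ there and the $t$-derivative of your conjugated isotopy $\psi_t'$ outside are two unrelated constructions, and they have no reason to agree where both are defined, so "integrating the resulting lift" is not yet meaningful. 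Nor can one extend $\psi_t'$ by hand from the boundary inward: a diffeomorphism $\partial T_0\to\partial T_t$ commuting with the angular fibration has the form $(k,\eta_0 e^{i\theta})\mapsto (g_\theta(k),\eta_0 e^{i\theta})$, and its naive radial extension $(k,\rho e^{i\theta})\mapsto (g_\theta(k),\rho e^{i\theta})$ is \emph{discontinuous} along the binding whenever $g_\theta$ genuinely depends on $\theta$. So "the parametrized trivialization extends continuously over $\mathcal K$" is precisely the point that needs proof.

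The repair is to work infinitesimally with a single global object from the start, and this is what the paper does, making the detour through Corollary 2's isotopy and the maps $\Lambda_t$ unnecessary. On $S^{2n-1}_r\times[0,1]$ one seeks a lift $\mathcal X$ of $\partial/\partial t$ subject to the conditions that (1) $\mathcal X$ annihilates $d(\Im\log f_{II,t})$ off the zero locus $\mathcal V_r$ (the argument is preserved), and (2) on the region $\mathcal W_{\eta_0/2}=\{|f_{II,t}|<\eta_0/2\}$ it also annihilates $d|f_{II,t}|$, so that there the value of $f_{II,t}$ itself is preserved; in particular integral curves cannot run into $\mathcal V_r$, and near $\mathcal K$ the field is compatible with the product structure and extends over the link. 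Such lifts exist locally (off $\mathcal V_r$ because the polar $S^1$-action makes $\arg f_{II,t}$ a submersion on $S^{2n-1}_r\setminus K_{t,r}$; near $\mathcal K$ by the product structure), and since conditions (1) and (2) are affine in $\mathcal X$, local solutions patch by a partition of unity. Integrating the resulting field yields simultaneously the isotopy of pairs of Corollary 1 and the commutativity with $f_{II,t}/|f_{II,t}|$. This is exactly the paper's proof: it constructs $\mathcal X$ with $d\pi_*\mathcal X=\partial/\partial t$, orthogonal to the projected gradient of $\Im\log f_{II,t}$ everywhere off $\mathcal V_r$, and orthogonal to $\mathrm{grad}\,|f_{II,t}|$ on $\mathcal W_{\eta_0/2}$, and defines $\psi_t$ by its flow. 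In short: keep your second mechanism, make the patching explicit, and delete the conjugation layer.
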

Taking $t=1$,  we get  a positive answer to the 
conjecture in \cite{O3}. 
\begin{proof}
Choose a positive real number $\eta_0$ as in Corollary 2.
Consider the cobordism variety
$\mathcal V_r:=\{(\zz,t)\in S_r^{2n-1}\times [0,1]\,|\, f_{II,t}(\zz,\bar\zz)=0\}$
and its open neighborhood
$\mathcal W_{\eta}:=\{(\zz,t)\in S_r^{2n-1}\times [0,1]\,|\, |f_{II,t}(\zz)|< \eta\}$ of $\mathcal V_t$.
Consider  the projection mapping 
\[
\pi:S_r^{2n-1}\times [0,1]\to [0,1], \quad (\zz,t)\mapsto t.
\]
Let $\frac{\partial}{\partial \theta}'$ be the projection of the gradient vector of 
$\Im \log\,f_{II,t}(\zz,\bar\zz)$
to the tangent space of
$S_r^{2n-1}\times [0,1]\setminus \mathcal V_t$.
Using the vector field $\frac{\partial}{\partial \theta}$ on $S_r^{2n-1}\times [0,1]$, we see easily that 
$\frac{\partial}{\partial \theta}'$ is a~non-vanishing vector on 
$S_r^{2n-1}\times [0,1]\setminus \mathcal V_t$ 
which is linearly independent with $\frac{\partial}{\partial t}$ over~$\Bbb{R}$. Now 
we construct 
a vector filed $\mathcal X$ on $S_r^{2n-1}\times [0,1]\setminus \mathcal V_t$ such that

\begin{enumerate}
\item $d\pi_*(\mathcal X(\zz,t))=\frac{\partial}{\partial t}$ and 
$\{\mathcal X(\zz,t),\frac{\partial}{\partial \theta}'(\zz,t)\}$
are orthogonal.
\item For $(\zz,t)\in \mathcal W_{\eta_0/2}$, $\{\mathcal X(\zz,t),\,\,{\rm grad}\,|f_{II,t}|(\zz,t)\}$
are also orthogonal. 
\end{enumerate} 
The condition (1) implies the argument of $f_{II,t}$ does not change along the integral curve of $\mathcal X$. 
The conditions (1) and  (2) implies the integral curve of 
$\mathcal X$ keeps the level $f_{II,t}=\eta$ for any $\eta$ with $|\eta|\le \eta_0/2$. 
Thus integral curves of vector field $\mathcal X$ exists over $[0,1]$ and 
we construct  the isotopy $\psi_t$  using the  integration curves of $\mathcal X$.
\end{proof}
\begin{remark}
Let $f(\zz, \bar{\zz}) = \sum_{i = 1}^{m} c_{i}\zz^{\nu_{i}}\bar{\zz}^{\mu_{i}}$ be a full  simplicial mixed polynomial 
and $g(\zz)$ be the associated Laurent polynomial of $f$. 
The last author  defined a canonical diffeomorphism of $\varphi : \Bbb{C}^{*n} \rightarrow \Bbb{C}^{*n}$ as follows 
$($\cite{O1}$)$: 
\begin{equation*}
\begin{split}
\varphi : \Bbb{C}^{*n} &\rightarrow \Bbb{C}^{*n}, \\
\zz = (\rho_{1}\exp(i\theta_{1}), \dots, \rho_{n}\exp(i\theta_{n})) 
&\mapsto \ww = (\xi_{1}\exp(i\theta_{1}), \dots, \xi_{n}\exp(i\theta_{n}))
\end{split}
\end{equation*}
where $(\rho_{1},\dots,\rho_{n})$ and $(\xi_{1},\dots,\xi_{n})$ satisfy 
\[
(N + M)\begin{pmatrix} \log \rho_{1} \\ \vdots \\ \log \rho_{n} \end{pmatrix} 
= (N - M)\begin{pmatrix} \log \xi_{1} \\ \vdots \\ \log \xi_{n} \end{pmatrix} 
\]
where $N = (\nu_{1}, \dots, \nu_{n})$ and $M = (\mu_{1}, \dots, \mu_{n})$. 
Then $\varphi$ satisfies that 
$\varphi(\Bbb{C}^{*n} \cap f^{-1}(c)) = \Bbb{C}^{*n} \cap g^{-1}(c)$ for any $c\in \mathbb C$ 
$($\cite[Theorem 10]{O1}$)$. 
However $\varphi$ cannot be extended to a homeomorphism of $\Bbb{C}^{n}\setminus\{ O\}$ to itself in general, 
except the case of mixed Brieskorn polynomial. 
\end{remark}

\begin{Example} We will give an example of the above remark.
Let $f(\zz, \bar{\zz})$ be a simplicial polynomial defined by
\[
f(\zz, \bar{\zz}) = z_{1}^{3}\bar{z}_{1}z_{2} + z_{2}^{3}\bar{z}_{2}z_{3} + z_{3}^{3}\bar{z}_{3}z_{1}.
\] 
Then the diffeomorphism of $\varphi : \Bbb{C}^{*3} \rightarrow \Bbb{C}^{*3},
\zz = (z_{1}, z_{2}, z_{3}) \mapsto \ww = (w_{1}, w_{2}, w_{3})$ is given by 
\[
\begin{pmatrix} w_{1} \\ w_{2} \\ w_{3} \end{pmatrix} = 
\begin{pmatrix}
 \lvert z_{1}\rvert^{\frac{17}{9}}\lvert z_{2}\rvert^{\frac{-4}{9}}\lvert z_{3}\rvert^{\frac{2}{9}}\exp(i\theta_{1}) \\ 
 \lvert z_{1}\rvert^{\frac{2}{9}}\lvert z_{2}\rvert^{\frac{17}{9}}\lvert z_{3}\rvert^{\frac{-4}{9}}\exp(i\theta_{2}) \\
 \lvert z_{1}\rvert^{\frac{-4}{9}}\lvert z_{2}\rvert^{\frac{2}{9}}\lvert z_{3}\rvert^{\frac{17}{9}}\exp(i\theta_{3}) 
\end{pmatrix}. 
\]
The above map cannot extend to a continuous map on  the coordinate planes $\{(z_{1},z_{2}, z_{3}) \in \Bbb{C}^3 \mid z_{1}z_{2}z_{3}=0 \}$ as the negative exponents in the above description.
So the map $\varphi$ cannot extend to a homeomorphism of $\Bbb{C}^{3}\setminus\{ O\}$ to itself.

\end{Example}



\end{document}